\newtheorem{theorem}{Theorem}[section]
\newtheorem{lemma}{Lemma}[section]
\def\ad#1{\begin{aligned}#1\end{aligned}}  \def\b#1{{\mathbf{#1}}}
\def\a#1{\begin{align*}#1\end{align*}} \def\an#1{\begin{align}#1\end{align}} 
 \def\t#1{{\operatorname{#1}}}
\def\p#1{\begin{pmatrix}#1\end{pmatrix}}  \numberwithin{equation}{section}
 \def\boxit#1{\vbox{\hrule height1pt \hbox{\vrule height 10pt width1pt depth2pt
    \kern1pt \small #1\kern1pt\vrule width1pt}\hrule height1pt }}
 \def\lab#1{\ \boxit{ #1 }\ \label{#1}}   \def\d{\operatorname{div}}
\long\def\myskip#1{}  
   \def\lab#1{\label{#1}}
\begin{document} \baselineskip=16pt\parskip=4pt

\title[nonconforming finite element]
 {A nonconforming P2 and discontinuous P1 mixed finite element on tetrahedral grids }

 \author {Shangyou Zhang}
 
\address{Department of Mathematical Sciences, University of Delaware,
    Newark, DE 19716, USA. }
\email{szhang@udel.edu}


\subjclass{Primary, 65N15, 65N30,  76M10}

\keywords{quadratic finite element, nonconforming finite element, mixed finite element, 
  Stokes equations, tetrahedral grid.}

\begin{abstract}
 A nonconforming $P_2$ finite element  is constructed by enriching
   the conforming $P_2$ finite element space with seven
   $P_2$ nonconforming bubble functions  (out of fifteen such bubble functions on
     each tetrahedron).
This spacial nonconforming $P_2$ finite element, 
   combined with the discontinuous $P_1$ finite element
   on general tetrahedral grids, is inf-sup stable for solving the Stokes equations.
Consequently such a mixed finite element method produces optimal-order convergent 
  solutions for solving the stationary Stokes equations. 
  Numerical tests confirm the theory.
 
\end{abstract}

\maketitle

\section{Introduction}
We solve the Stokes equations:  Find functions
  $\b u $, the fluid velocity, and $p$ , the pressure in the flow, on a  
  3D polyhedral domain  $\Omega$ such that
  \an{\label{e1-1} &&  - \Delta \b u  + \nabla p
            &=\b f \qquad && \hbox{in } \Omega, && \\
       \label{e1-2}        &&  \d \b u  &= 0 && \hbox{in } \Omega, \\
       \label{e1-3}        &&  \b u  &= \b 0  && \hbox{on } \partial\Omega,  }
  where $\b f \in L^2(\Omega)$.  The variational form of \eqref{e1-1}--\eqref{e1-3} reads:  
    Find $\b u\in\b V=H_0^1(\Omega)^3$ and 
    $p\in P= L^2_0(\Omega)$  such that 
\a{ \ad{  (\nabla \b u, \nabla \b v)- (\d \b v,p) &=(\b f,\b v)  
             &&   \forall \b v \in \b V, \\
        (\d \b u,q)  &=0   &&  \forall q \in P. }
   }  
The problem is symmetric, but not positive definite.
When choosing the finite element spaces for $V$ and $P$, most pairs are not stable,
  i.e., they fail in satisfying the inf-sup condition \eqref{inf} below. 
A natural pair of finite elements is
  the  $P_k^{\t{c}}$/$P_{k-1}^{\t{dis}}$ mixed element where $P_k^{\t{c}}=\b V_h\subset \b V$
   is the space of continuous polynomials of a tetrahedral mesh, and $P_{k-1}^{\t{dis}}=P_h
    \subset P$ is the space of discontinuous $P_{k-1}$ polynomials on the mesh.
Mostly such a method is not stable.  But on Hsieh-Clough-Tocher macro 
  triangular/tetrahedral grids
   \cite{Guzman-Neilan,Qin,Xu-Zhang,Zhang-3D},
  the full $P_k^{\t{c}}$/$P_{k-1}^{\t{dis}}$ space, $k\ge 2$ in 2D or $k\ge 3$ in 3D, 
   is inf-sup stable.
In all other known stable cases, the pressure space is a proper subspace of 
  the $P_{k-1}^{\t{dis}}$ space  
\cite{Arnold-Qin,Bacuta,Fabien-Neilan,Falk-Neilan,Fu-Guzman-Neilan,Guzman-Lischke-Neilan,Huang-Q2,Neilan,Scott-Vogelius,Scott-V,Zhang-3D,Zhang-ps2,Zhang-Qk,Zhang-3d-P2,Zhang-6},
except when the discrete velocity is enriched by non-polynomial bubbles or subgrid-bubbles
   \cite{Guzman-Neilan1,Guzman-Neilan2}.

On the other side,  it is relatively easy to find stable 
   nonconforming $P_k^{\t{nc}}$/$P_{k-1}^{\t{dis}}$
   pairs of finite elements, especially in low polynomial degree cases.
Here $P_k^{\t{nc}}$ means the nonconforming finite element space of polynomial degree $k$,
   where the piecewise polynomials are continuous up to $P_{k-1}$ order in the sense that
   the jump of function is orthogonal to $P_{k-1}$ polynomials on inter-element face, 
    $\int_F [f]p_{k-1}ds=0$.
Crouzeix and Raviart proposed nonconforming finite elements first 
   \cite{Crouzeix-Raviart}.   
In this first paper,  $P_1^{\t{nc}}$/$P_{0}^{\t{dis}}$ elements are proved to be inf-sup stable, 
   on triangular and tetrahedral grids.  
The proposed 2D $P_3^{\t{nc}}$/$P_{2}^{\t{dis}}$ element \cite{Crouzeix-Raviart}
    is enriched by three $P_4^{\t{nc}}$-bubbles for each component of $\b v$.
It is proved that higher-order bubbles are not needed in \cite{Crouzeix-Falk} if the   
     triangular grid  can be separated in to macro-triangles of several patterns.

Fortin and Soulie studied 2D $P_2^{\t{nc}}$/$P_1^{\t{dis}}$ mixed finite elements \cite{Fortin-2D}
  and proved the inf-sup stability, without enriching the $P_2^{\t{nc}}$ element by any 
    higher-order bubbles.
For 2D $P_k^{\t{nc}}$/$P_{k-1}^{\t{dis}}$  elements,  Matthies and Tobiska enrich the
   velocity space by many higher-order nonconforming bubble functions so that the method
   is stable for all $k\ge 1$ \cite{Matthies-Tobiska}.
But \cite{Baran} add only one $P_k^{\t{nc}}$-bubble (not high-polynomial bubbles like
    \cite{Matthies-Tobiska}) to each
  component of $P_k$ conforming finite element velocity each triangle 
   so that the $P_k^{\t{nc}}$/$P_{k-1}^{\t{dis}}$ is stable for even polynomial 
      degree $k\ge 2$.
When $k=2$, \cite{Baran} repeats the result of Fortin and Soulie \cite{Fortin-2D}.

In 3D, there are only two works \cite{Ciarlet} and  \cite{Fortin}
   on the $P_k^{\t{nc}}$ element, other than the work of first element
   $P_1^{\t{nc}}$/$P_{0}^{\t{dis}}$ in \cite{Crouzeix-Raviart}.
Both \cite{Ciarlet} and  \cite{Fortin} study the 3D $P_2^{\t{nc}}$ element
  on tetrahedral grids.
But none constructs a working $P_2^{\t{nc}}$ finite element space
  for the Stokes equations, while this work is the
  first one accomplishing the work.
In \cite{Fortin}, Fortin finds five $P_2^{\t{nc}}$ bubbles for the scalar 
     $P_2^{\t{c}}$ finite element space on each tetrahedron.
But \cite{Fortin} only suggests to use  (some of) these bubbles to stabilize 
   the $P_2^{\t{c}}$ finite element, and does not propose a working method.
In \cite{Ciarlet}, Ciarlet, Dunkl and Sauter find (all) $P_k^{\t{nc}}$ bubbles 
   for the $P_k^{\t{c}}$ finite element space in 3D, $k \ge 1$.
When $k=2$,  the bubbles of \cite{Fortin} and \cite{Ciarlet} are same.
\cite{Ciarlet} further points out the $P_k^{\t{c}}$ space and
   the (found) $P_k^{\t{nc}}$-bubble space are not linearly independent.
\cite{Ciarlet} suggests to remove all the vertex basis functions of
 the $P_k^{\t{c}}$ finite element to obtain a uni-solvent $P_k^{\t{nc}}$ finite element space.
But this $P_2^{\t{nc}}$ finite element space is not inf-sup stable when combined
  with the $P_1^{\t{dis}}$ pressure space for the Stokes equations.

\def\ncb{\text{$P_2^{\t{nc}}$}} \def\cq{\text{$P_2^{\t{c}}$}} 

In this work,  we propose to add seven, out of fifteen,  
   vector $P_2^{\t{nc}}$-bubbles of \cite{Ciarlet,Fortin} to the
   $P_2^{\t{c}}$ space, on one tetrahedron.
It is shown such a 3D $P_2^{\t{nc}}$/$P_{1}^{\t{dis}}$
   finite element is inf-sup stable for the Stokes equations. 
The unique solution of such finite element Stokes equations converges at the optimal order.
This work closes a 50-year open problem that if the 3D $P_2^{\t{nc}}$/$P_{1}^{\t{dis}}$
  finite element is stable, appeared in the 1973 paper of  Crouzeix-Raviart 
  \cite{Crouzeix-Raviart}, the first paper 
   on the finite element methods for Stokes equations.

Recently the above mentioned unstable
   $P_2^{\t{nc}}$/$P_1^{\t{dis}}$ element (enriched by three $P_2^{\t{nc}}$-bubbles 
    each tetrahedron) is published in Math.~Comp.~\cite{Sauter} as a stable 
    finite element.
The paper \cite{Sauter} simply quoted the Stenberg macro-element theorem \cite{Stenberg} as a proof.
But \cite{Sauter} missed a condition of the Stenberg macro-element theorem 
   that there must be an internal degree of freedom inside each face-triangle. 
The $P_2^{\t{nc}}$/$P_1^{\t{dis}}$ element of \cite{Sauter} is not stable on general tetrahedral meshes,
  but it can be stable on some structured meshes.
For example, it is stable on a tetrahedral mesh refined from a flat-face hexahedral 
   mesh in a special way \cite{Zhang-Shuo}, as $P_2^{\t{c}}$/$P_0^{\t{dis}}$ element is proved stable on
     such tetrahedral meshes.

The rest of the paper is organized as follows.
In Section 2, we construct the \ncb-bubble functions on general tetrahedra.  
   We define the proposed $P_2^{\t{c}}$/$P_{1}^{\t{dis}}$
   finite element for the Stokes equations. 
In Section 3, we  prove the stability of the $P_2^{\t{c}}$/$P_{1}^{\t{dis}}$ finite element.
In Section 4, we show the finite element solutions converge at the optimal order. 
In Section 5, we provide some numerical results.

\section{The \ncb-bubbles and the $P_2^{\t{nc}}$/$P_{1}^{\t{dis}}$ element }

Let $\mathcal{T}_h=\{T_1,\cdots,T_{n_t}\}$ be a quasiuniform  
	tetrahedral grid on a 3D polyhedral $\Omega$.
Let $\mathcal{V}_h^0=\{\b x_1,\cdots,\b x_{n_v}\}$ 
   be the set of interior vertex
   in the grid $\mathcal{T}_h$.
Let $\mathcal{E}_h^0=\{\b e_1,\cdots,\b e_{n_e}\}$ 
    be the set of interior edges (mid-point inside $\Omega$) 
   in the grid $\mathcal{T}_h$.
Let $\mathcal{F}_h$ be the set of 
     triangles in the grid $\mathcal{T}_h$. 
Let $\mathcal{F}_h^0=\{F_1,\dots,F_{n_f}\}$ be the set of 
   interior triangles (whose bary-center are inside $\Omega$) 
   in the grid $\mathcal{T}_h$. 
 
 \begin{figure}[htb] 
  \setlength{\unitlength}{1pt}

 \begin{center}\begin{picture}(  250.,  130.)(  0.,  0.)
     \def\lb{\circle*{0.8}}\def\lc{\vrule width1.2pt height1.2pt}
     \def\la{\circle*{0.3}}
   \put(-12,1){$\b x_1$}\put(152,41){$\b x_2$}\put(55,118){$\b x_3$}
       \put(52,43){$\b x_4$} \put(10,90){$T:$} \put(60,60){$F_4$} 
     \multiput(   1.56,   1.25)(   2.343,   1.874){ 20}{\la}
     \multiput( 148.00,  40.00)(  -3.000,   0.000){ 32}{\la}
     \multiput(  50.00, 118.00)(   0.000,  -3.000){ 26}{\la}
     \multiput(   0.00,   0.00)(   0.096,   0.231){520}{\la}
     \multiput(   0.00,   0.00)(   0.242,   0.064){620}{\la}
     \multiput( 150.00,  40.00)(  -0.195,   0.156){512}{\la}

   \put(180,90){$\lambda_i\in P_1(T)$, }
   \put(180,75){$\lambda_i(\b x_j)=\delta_{ij}, $}
   \put(180,60){$\lambda_i|_{F_i}=0. $}

 \end{picture}\end{center}

\caption{A tetrahedron $T$, its vertices $\b x_i$ and its barycentric coordinates $\lambda_i$.}
\label{T}
\end{figure}
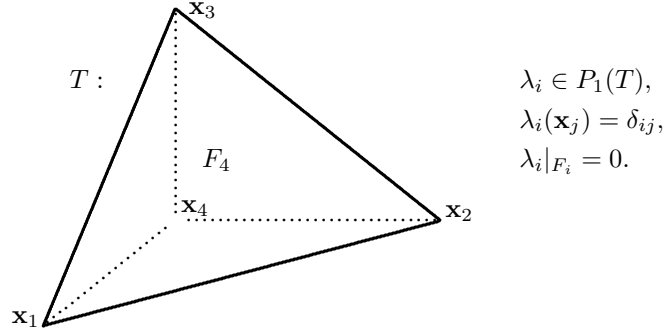 
 
Let $T\in\mathcal{T}_h$ have vertices $\{\b x_i, i=1,\dots,4\}$, cf. Figure \ref{T}. 
Let $\lambda_i$ be a barycentric coordinate of $T$, i.e., 
\a{  \lambda_i\in P_1(T), \  \lambda_i(\b x_j)=\delta_{ij}, \ i,j=1,\dots,4.  }
Let $F_i$ be a face-triangle of $T$, opposite to the vertex $\b x_i$.   
We have $\lambda_i|_{F_i}=0 $.
By \cite{Fortin}, the central \ncb-bubble on $T$ is 
\an{\label{Phi0} \Phi_0(\b x) = 2-4(\lambda_1^2+\lambda_2^2+\lambda_3^2+\lambda_4^2). }
$\Phi_0(\b x)$ is the unique $P_2$ polynomial satisfying
\an{\label{dof} \int_{F_i} \Phi_0 \lambda_j dS &=0, \  i,j=1,\dots,4,  \ j\ne i,\quad
      \Phi_0 (\b e_T)
        =1,  }
where  $\b e_T=(\b x_1+\b x_2+\b x_3+\b x_4)/4$.
By modifying Fortin's bubbles \cite{Fortin}, the face \ncb-bubbles on $T$ are defined as
\an{\label{Phi1} \Phi_i(\b x) =12(1-\lambda_i)^2-18
    (\lambda_1^2+\dots+\widehat{\lambda_i^2}+\dots+\lambda_4^2)-\left(\frac 32\right)^3 \Phi_0, }
$i=1,\dots,4$.  
Here $\widehat{\lambda_i^2}$ means the term is dropped from the sum.
$\Phi_i(\b x)$ is the unique $P_2$ polynomial satisfying
\a{ & \int_{F_j} \Phi_i \lambda_m dS 
            =\begin{cases} 0, &  m,j=1,\dots,4,  \ j,m\ne i,\\
                              |F_j|, & m=1,\dots,4,  \ m\ne i, j=i, \end{cases} 
    \quad  \Phi_i (\b e_T)
       =0,  }
$i=1,\dots,4$, where $\b e_T$ is the bary-center of $T$. 

The nodes of \ncb \ basis functions are bary-centers and mid-face-triangle points of all
   $T\in \mathcal{T}_h$.
Let $\b b_T$ and $\{\b x_i, \ i=1,\dots,4\}$ be the
bary-center and the mid-triangle points of $T\in\mathcal{T}_h$, respectively.
We adopt the following 7 $P_2^{\t{nc}}$ bubbles: 
  3 \ncb-bubbles $\Phi_0\b E_m$ (defined after \eqref{g-2} below) at the bary-center $\b b_{i_1}$
     of $T_{i_1}$ and 4 \ncb-bubbles $\Phi_j\b n_j$ associated with
     the 4 face-triangles $\{ F_j, \ j=1,\dots,4\}$ of $T_{i_2}$.
Here $T_{i_1}$ is the tetrahedron on which the $i$-th global basis function $\phi_i$  
  of the first type $P_2^{\t{nc}}$ bubble  is supported on, 
  and $T_{i_2}$ is any one of the two tetrahedra whose
      interface triangle $F_j$ is the supporting triangle of the globally $i$-th
       basis function $\phi_i$ of the second type $P_2^{\t{nc}}$ bubble. 
Together, globally we list them as, excluding those basis functions at domain boundary, 
\an{ \label{g-2}
   \ad{&& \phi_i &=\begin{cases} \Phi_0 \b E_m, & \t{on }\ T, \ \t{if } \ \b b_{i_1}
                               \in T, \\
                         0 , & \t{on }\ T, \ \t{if } \ \b b_{i_1} \not\in T, \end{cases}
     \ && i=1,\dots,n_1, \\
  &&\phi_i &=\begin{cases} \Phi_j \b n_j, & \t{on }\ T_{i_2}, \ \t{if } \ F_{j}
                               \in T_{i_2}, \ F_{j}\not\subset\partial\Omega,\\
                         0 , & \t{on }\ T, \ \t{if } \ F_j \not\subset T, \end{cases}
     && i=n_1+1,\dots,n_{nc},  
  } }  
where $\b b_{i_1}$ and $T_{i_1}$ are described above, the supporting tetrahedron of
   $\phi_i$, $T_{i_2}$ is any one of two supporting tetrahedra of $\phi_i$,
    $\b E_m=\p{1\\ 0\\ 0}, \ \p{0\\ 1\\ 0}$ or $\p{0\\0\\1}$,  
    $\b n_j$ is a unit normal vector (either direction but
   fixed)  on triangle $F_j$, $n_1$ is the number of all type one $P_2^{\t{nc}}$
   bubbles, and $n_{nc}$ is the number of all $P_2^{\t{nc}}$ bubbles.

The nodes of \cq \ nodal basis functions are vertices and mid-edge points.
Let $\{\b x_i,\ i=1,\dots,4 \}$ and $\{\b e_i, \ i=1,\dots,6\}$ be the
vertices and the mid-edge points of $T\in\mathcal{T}_h$, respectively.
A nodal basis function $\Psi_{i} \in P_2(T)$ satisfies 
\a{ \Psi_i(\b m_j) =\delta_{ij}, }
where the nodes $\{\b m_i, \ i=1,\dots,10\}=\{\b x_1,\dots,\b x_4,  \b e_1,\dots,\b e_6\}$.
The 3 global basis functions at a node $\b m_{i_1}$ (a vertex or a mid-edge point) is defined by
\an{\label{g-1} \psi_i=\begin{cases} \Psi_j \b E_m, & \t{on }\ T, \ \t{if } \ \b m_j=\b m_{i_1}
                               \in T, \\
                         0 , & \t{on }\ T, \ \t{if } \ \b m_i \not\in T, \end{cases} }
$i=1,\dots,n_c$, where $\b m_{i_1}$ is a $P_2^{\t{c}}$ Lagrange node,
   $\b E_m=\p{1\\0\\0}, \p{0\\1\\0}$ or $\p{0\\0\\1} $, and
   $n_c$ is the number of global $P_2^{\t{c}}$ basis functions.

The proposed \ncb \ finite element space is defined by
\an{\label{V-h} \ad{ \b V_h & =\Big\{\sum_{i=1}^{n_{nc}} c_i \phi_i \Big\}
            \oplus \Big( P_2^{\t{c}} \cap H^1_0(\Omega)^3 \Big)  \\
    & =\Big\{ \b v_h=\sum_{i=1}^{n_{nc}} c_i \phi_i + \sum_{j=1}^{n_c}d_j \psi_j \Big\},  } }
where $\phi_i$ and $\psi_j$ are defined in \eqref{g-2} and \eqref{g-1}, respectively,
    $n_{n_c}=3n_{n_t}+n_f$ and $n_c=3n_v+3n_e$.
The $P^{\t{dis}}_1$ finite element space, for approximating the pressure, is defined by
\an{\label{P-h}
    P_h =\{ p_h\in L^2_0(\Omega) : p_h|_T\in P_1(T), T\in \mathcal{T}_h \}. }

The \ncb/$P_1^{\t{dis}}$ finite element problem for the Stationary Stokes equations 
    \eqref{e1-1}--\eqref{e1-3} reads:  
    Find $\b u_h\in \b V_h$ and 
    $p\in P_h$  such that 
\an{ \label{f-e-1} && (\nabla_h \b u_h, \nabla_h \b v_h)- (\t{div}_h \b v_h,p_h) &=(\b f,\b v_h)  
             &&   \forall \b v_h \in \b V_h, && \\
        && (\t{div}_h \b u_h,q_h)  &=0   &&  \forall q_h \in P_h,  
   \label{f-e-2} }  where $\nabla_h$ and $\t{div}_h$ denote the
     piecewise gradient and divergence on mesh $\mathcal T_h$ respectively,
     and $\b V_h$ and $P_h$ are defined in \eqref{V-h} and \eqref{P-h},
    respectively.

\begin{lemma} The \ncb \ bubble space and the \cq \ vector space in \eqref{V-h} are
   linearly independent.  In other words, the following $n_{nc}+n_c$ vectors are
   linearly independent,
\a{ \phi_1,\dots,\phi_{n_{nc}},\psi_1,\dots \psi_{n_c}, }
  where $\phi_i$ and $\psi_j$ are defined in \eqref{g-2} and \eqref{g-1},
    respectively.
\end{lemma}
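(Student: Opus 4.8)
The plan is to split the $n_{nc}+n_c$ spanning functions into the conforming family $\{\psi_j\}$ and the bubble family $\{\phi_i\}$ and to establish three facts: (i) the $\psi_j$ are independent; (ii) the $\phi_i$ are independent; and (iii) the span of the bubbles meets the conforming space $\cq \cap H_0^1(\Omega)^3$ only in $\b 0$. Granting these, any relation $\sum_i c_i\phi_i+\sum_j d_j\psi_j=\b 0$ forces $\sum_i c_i\phi_i=-\sum_j d_j\psi_j$ to lie in that intersection, hence to vanish, and (i)--(ii) then kill all the coefficients. Fact (i) is the standard unisolvence of the global $\cq$ Lagrange nodal basis, so the work is in (ii) and (iii).

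For (ii) I would read the coefficients off two families of functionals that are triangular on the bubbles. Evaluating at the barycenter $\b b_T$ returns exactly the three central coefficients on $T$, because the face bubbles satisfy $\Phi_j(\b b_T)=0$ by the construction \eqref{Phi1}. Once the central coefficients are shown to vanish, the one-sided face moments $\b v\mapsto\int_F(\b v\cdot\b n_F)\lambda_m\,dS$ return the face coefficient $c_F$ up to the factor $|F|$, since by \eqref{dof} the central bubble has $\int_F\Phi_0\lambda_m\,dS=0$, and by \eqref{Phi1} a face bubble for a face $F'\neq F$ has $\int_F\Phi_{F'}\lambda_m\,dS=0$. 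Thus $\sum_i c_i\phi_i=\b 0$ forces all $c_i=0$.

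Fact (iii) is the heart of the matter. Let $\b g=\sum_i c_i\phi_i$ be continuous. Since a continuous piecewise quadratic lies in $\cq$, it suffices to show that the only bubble combination with vanishing interelement jumps is $\b g=\b 0$. Each $\phi_i$ already has jumps $L^2$-orthogonal to $P_1$ on every face (this is exactly the \ncb matching built into \eqref{Phi0}--\eqref{Phi1}), so continuity of $\b g$ is equivalent to the vanishing, on every interior face, of the complementary part of the jump lying in $P_2\ominus P_1$. Writing out these conditions face by face from the explicit formulas \eqref{Phi0} and \eqref{Phi1} yields a homogeneous linear system in the $c_i$, which I would then solve to conclude $c_i=0$.

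The main obstacle is precisely this last system. A central bubble jumps across all four faces of its tetrahedron, and a face bubble jumps across the faces of both tetrahedra in its support, so the $P_2\ominus P_1$ jump conditions couple the central coefficients of a tetrahedron to the face coefficients of its faces and of its neighbors; there is no purely local, single-tetrahedron reduction. Indeed, on one tetrahedron the ten \cq\ shape functions already exhaust $P_2(T)$, so the seven bubbles are locally dependent on them, and independence is an inherently global phenomenon forced by the continuity constraints the conforming space must satisfy. I expect the resolution to hinge on the specific normalizations $2-4\sum\lambda_k^2$ and the coefficients $12,\,18,\,(3/2)^3$ in \eqref{Phi0}--\eqref{Phi1}, which should make the complementary jump of each central bubble nondegenerate and sufficiently decoupled from the face bubbles that the system admits only the trivial solution; organizing the elimination so that the central coefficients go first, via one face of each tetrahedron, and the face coefficients second looks like the most economical route.
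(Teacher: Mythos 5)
Your three-part reduction (independence of the $\psi_j$, independence of the $\phi_i$, trivial intersection of the two spans) is a sound way to organize the statement, and facts (i) and (ii) go through essentially as you describe: barycenter evaluation and the one-sided face moments $\int_F(\b v\cdot\b n_F)\lambda_m\,dS$ do triangularize the bubble family. The gap is fact (iii), which you correctly identify as the heart of the matter and then leave as an unexecuted plan: you write down a homogeneous system in the $c_i$ coming from the $P_2\ominus P_1$ part of the interelement jumps and say you \emph{expect} the normalizations in \eqref{Phi0}--\eqref{Phi1} to make it nonsingular, but you never exhibit the elimination. Since, as you yourself observe, the seven bubbles on a single tetrahedron are linearly dependent on the ten local $P_2$ shape functions, nonsingularity of this globally coupled system is exactly the content of the lemma; asserting it is not proving it.

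There is also a structural reason your version of (iii) is harder than it needs to be: you use only interelement continuity of $\b g=\sum_i c_i\phi_i$ and discard the two pieces of boundary information built into \eqref{V-h}, namely that the conforming part lies in $H^1_0(\Omega)^3$ and that face bubbles on boundary faces are excluded. These are precisely what the paper's proof leans on. It performs a sweep from the boundary inward: on a tetrahedron with a face on $\partial\Omega$ only $6+12=18$ coefficients survive; the three face-moment equations per component per interior face force the twelve conforming coefficients to equal $\tfrac{15}{2}c_F n_{Fk}$ simultaneously for two distinct faces $F$, and the linear independence of two face normals then kills the face-bubble and conforming coefficients; barycenter evaluation kills the three central coefficients; finally the conclusion is transported across the now-resolved face to the neighboring tetrahedron and the process repeats until the mesh is exhausted. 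Your purely interior jump system has no such seed, and you give no argument that it admits only the trivial solution. To complete your route you would either have to carry out the global elimination explicitly or reintroduce the boundary data and mimic the paper's boundary-to-interior propagation.
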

\begin{proof} We need to prove uniqueness of expansion of 
\a{ \b v_h=\sum_{i=1}^{n_{nc}} c_i \phi_i + \sum_{j=1}^{n_c}d_j \psi_j,  }
i.e., if $\b v_h=0$, then all $c_i=d_j=0$.

Let $T\in {\mathcal{T}}_h$ be at the boundary of $\Omega$ 
    with 4 face-triangle $F_i$, $i=1,\dots,4$, where $F_4\subset \partial\Omega$.
On $T$, the 6 \ncb-basis functions are numbered as $\phi_{i_m}$, $m=1,\dots, 6$, 
\a{  \p{\Phi_0\\0\\0}, \p{0\\ \Phi_0\\0}, \p{0\\0\\ \Phi_0}, 
     \p{n_{11} \Phi_1\\ n_{12} \Phi_1\\ n_{13} \Phi_1}, 
     \p{n_{21} \Phi_2\\ n_{22} \Phi_2\\ n_{23} \Phi_2}, 
     \p{n_{31} \Phi_3\\ n_{32} \Phi_3\\ n_{33} \Phi_3}, }
where $\b n_i=\p{n_{i1}\\n_{i2}\\n_{i3}}$ is the fixed unit normal vector on $F_i$.
The seventh \ncb-function $
     \p{n_{41} \Phi_4\\ n_{42} \Phi_4\\ n_{43} \Phi_4}$ has $0$ coefficient
   in the definition of
   $\b V_h$ \eqref{V-h} as it is at the domain boundary.
By definition \eqref{V-h}, the 18 \cq-basis functions on face $F_4$ have 0 coefficients.
The other 12 \cq-basis functions on $T$  are numbered as $\psi_{j_m}$, $m=1,\dots,12$, 
\a{  \p{\Psi_i\\0\\0}, \p{0\\ \Psi_i\\0}, \p{0\\0\\ \Psi_i}, \ i=1,\dots,4, }
where $\Psi_4$ is the \cq-basis at vertex $\b x_4$, and
    $\Psi_1$,  $\Psi_2$ and $\Psi_3$ are the \cq-basis at mid-edge of $\b x_1\b x_4$,
    $\b x_2\b x_4$ and $\b x_3\b x_4$, respectively.
We know that
\an{\label{basis} \Psi_i=4\lambda_i\lambda_4, \ i=1,\dots,3; \
           \Psi_4 = \lambda_4(1-2\lambda_1-2\lambda_2-2\lambda_3). }
Let $\b v_h=\b v_1- \b v_2$ where
\a{ \b v_1=\sum_{m=1}^{6} c_{ m} \phi_{i_m}, \quad
    \b v_2=\sum_{m=1}^{12}d_{ m} \psi_{j_m}.}
There are $6+12=18$ coefficients/unknowns.  We have $3\times (3\times 4+1)=39$ equations to 
  determine these 18 unknowns.
  For each component of three components, its integral on each face against each 2D $P_1$ polynomial is zero
    which gives 3 equations on each face-triangle, and its integral on the tetrahedron is zero, which gives
    one equation.  Thus we have $3\times (3\times 4+1)=39$ equations.
    
The three components of $\b v_1$ and of $\b v_2$
    are $P_2$ polynomials and their dofs of \eqref{dof} are same.
 On $F_1$, for the first component of $\b v_1$ and $\b v_2$,  by \eqref{basis}, we get
\a{ \frac 1{|F_1|} \int_{F_1} (\b v_2)_1 \lambda_2 dS &
       = \frac{d_2}{15}+ \frac{d_3}{30} +\frac{d_4}{30} = c_4n_{11},  \\
    \frac 1{|F_1|} \int_{F_1} (\b v_2)_1 \lambda_3 dS &
       = \frac{d_2}{30}+ \frac{d_3}{15} +\frac{d_4}{30} = c_4n_{11},  \\
    \frac 1{|F_1|} \int_{F_1} (\b v_2)_1 \lambda_4 dS &
       = \frac{d_2}{30}+ \frac{d_3}{30} +\frac{d_4}{15} = c_4n_{11}, }
where the normal vector on $F_1$, $\b n_1=(n_{11},n_{12},n_{13})$.
We use these three equations to eliminate three unknowns to get
\a{ d_2=d_3=d_4=\frac{15}2 c_4n_{11}. }
Similarly, by computing $\int_{F_2} (\b v_2)_1 \lambda_m dS$ and
    $\int_{F_3} (\b v_2)_1 \lambda_m dS$, we get
\a{ d_1& =d_3=d_4=\frac{15}2 c_5n_{21}, \\
    d_1& =d_2=d_4=\frac{15}2 c_6n_{31}. }
Together, \a{ d_1& =d_2=d_3=d_4=\frac{15}2 c_4n_{11}=\frac{15}2 c_5n_{21}
           =\frac{15}2 c_6n_{31}. }
By computing $\int_{F_1} (\b v_2)_2 \lambda_m dS$, $\int_{F_2} (\b v_2)_2 \lambda_m dS$ and
    $\int_{F_3} (\b v_2)_2 \lambda_m dS$, we get
\a{ d_5=d_6=d_7=d_8=\frac{15}2 c_4n_{12}=\frac{15}2 c_5n_{22}
           =\frac{15}2 c_6n_{32}. }
By computing the moments of the third component,  we get
\a{ d_9=d_{10}=d_{12}=d_{12}=\frac{15}2 c_4n_{13}=\frac{15}2 c_5n_{23}
           =\frac{15}2 c_6n_{33}. }
Because $\b n_1$ and $ \b n_2$ are linearly independent,  it follows that
\a{  \frac{15c_4}2 \b n_1 = \frac{15c_5}2 \b n_2 
   \ \Rightarrow \ \frac{15c_5}2 = 0,}
and   \a{ d_1=\dots=d_{12}=c_4=c_5=c_6=0.  }
Evaluating $\b v_h$ at the bary-center $\b e_T$,  we get
\a{  \b v_h(\b e_T) &=\frac 14\p{d_1 + d_2+d_3\\
                               d_5 + d_6+d_7\\
                               d_9+ d_{10}+ d_{12} } -\frac 18 \p{d_4\\d_8\\d_{12}}
                   -\p{c_1\\c_2\\c_3}=-\p{c_1\\c_2\\c_3}=\b 0. }
It follows that \a{ c_1& =c_2=c_3=0. } 
All coefficients of $\b v_h$ on $T$ are zero.  

On the next $T_1$ which shares a common
  triangle $F_1$ with $T$,  all 19 coefficients of $\b v_h|_{T_1}$
   for the basis functions on $F_1$ are zero as
  they are also the coefficients of $\b v_h|_T$.
Thus, $T_1$ is like $T$ where the boundary basis functions on $F_1$ do not appear in the
   linear expansion of $\b v_h|_{T_1}$.  
Repeating the above work for every tetrahedron sequentially,  
    we can exhaust all $T\in \mathcal{T}_h$ to show
   that the coefficients of $\b v_h|_{T}$ on all $T$ are zero.  
The proof is complete.
\end{proof}

\section{The stability }

We prove the inf-sup condition next.

\begin{lemma} There is a positive constant $C$ independent of $h$, such that
\an{\label{inf} \inf_{p_h \in P_h} \sup_{\b v_h\in \b V_h}
     \frac { (\t{div}_h \b v_h, p_h) }{ \|\b v_h\|_{1,h} \|p_h\|_0 } \ge C,
} where $P_h$ and $\b V_h$ are defined in \eqref{P-h} and \eqref{V-h}, respectively,
   and $\|\b v_h\|_{1,h}^2 = \|\b v_h\|_{0}^2 + \|\nabla_h \b v_h\|_{0}^2$.
   Here $\nabla_h \b v_h$ stands for the piecewise gradient on mesh $\mathcal T_h$.
\end{lemma}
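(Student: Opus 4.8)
The plan is to prove \eqref{inf} by the classical two-level (Boland--Nicolaides / Stenberg) pressure-splitting technique, exploiting the fact that the two families of bubbles were designed to control two complementary parts of the pressure. I would split each $p_h\in P_h$ on every $T$ as $p_h = p_h^0 + p_h^1$, where $p_h^0|_T = |T|^{-1}\int_T p_h\,dx$ is the elementwise mean (a globally mean-zero $P_0^{\t{dis}}$ function) and $p_h^1 = p_h - p_h^0$ is the elementwise mean-zero linear fluctuation. Since the two parts are $L^2(T)$-orthogonal on each $T$, one has $\|p_h\|_0^2 = \|p_h^0\|_0^2 + \|p_h^1\|_0^2$. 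I would then build a test velocity $\b v_h = \b v^0 + \delta\,\b v^1\in\b V_h$, with $\b v^1$ controlling $p_h^1$ and $\b v^0$ controlling $p_h^0$, and fix the scalar $\delta$ at the end by a Young's-inequality absorption of the only surviving cross term. Note that the conforming $P_2$ part of $\b V_h$ plays no role here; the inf-sup is driven entirely by the bubbles, and the conforming space is needed only for approximation order.

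For the fluctuation I would use the three central bubbles $\Phi_0\b E_m$ on each $T$, working locally. The key identity is that, because $\Phi_0$ has vanishing face moments against all of $P_1$ (from \eqref{dof} together with $1 = \sum_j \lambda_j$), integration by parts gives, for every linear $q$ on $T$, $(\d(\Phi_0\b E_m), q)_T = -\big(\int_T\Phi_0\big)\,\partial_{x_m}q$. As $q$ ranges over the mean-zero linears on $T$, $\nabla q$ ranges over all of $\mathbb{R}^3$ and $\int_T\Phi_0\neq 0$, so the three central bubbles realize, element by element, every mean-zero linear pressure; inverting this local $3\times 3$ system (uniformly, after affine scaling to the reference tetrahedron) yields $\b v^1$ with $(\t{div}_h\b v^1, p_h^1) = \|p_h^1\|_0^2$ and $\|\b v^1\|_{1,h}\le C\|p_h^1\|_0$. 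Moreover the same vanishing-moment property gives $\int_T\d(\Phi_0\b E_m) = \int_{\partial T}\Phi_0(\b E_m\cdot\b n)\,dS = 0$, so that $(\t{div}_h\b v^1, p_h^0) = 0$: the central bubbles do not disturb the constant part, which is exactly the decoupling that lets the two-level argument close.

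For the constant part I would show that the face bubbles $\Phi_j\b n_j$ reproduce the Crouzeix--Raviart flux structure, so that $p_h^0$ is controlled by the classical $P_1^{\t{nc}}/P_0^{\t{dis}}$ inf-sup stability. The point is a moment computation: using the definition of $\Phi_i$ one checks $\int_{F_k}\Phi_i\,dS = 0$ for $k\neq i$ and $\int_{F_i}\Phi_i\,dS = 3|F_i|\neq 0$, whence $\int_T\d(\Phi_i\b n_i) = \int_{\partial T}\Phi_i(\b n_i\cdot\b n)\,dS$ is carried entirely by the single face $F_i$, with equal and opposite contributions to the two tetrahedra sharing $F_i$. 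Thus the span of the interior face bubbles is isomorphic, through its mean normal fluxes, to the Crouzeix--Raviart space, and the known $P_1^{\t{nc}}/P_0^{\t{dis}}$ stability on shape-regular tetrahedral grids delivers $\b v^0$ with $(\t{div}_h\b v^0, p_h^0)\ge c\|p_h^0\|_0^2$ and $\|\b v^0\|_{1,h}\le C\|p_h^0\|_0$. Combining, $(\t{div}_h(\b v^0 + \delta\b v^1), p_h) \ge c\|p_h^0\|_0^2 + \delta\|p_h^1\|_0^2 + (\t{div}_h\b v^0, p_h^1)$, and the cross term is bounded by $\|\t{div}_h\b v^0\|_0\|p_h^1\|_0\le C\|p_h^0\|_0\|p_h^1\|_0$, which Young's inequality absorbs once $\delta$ is chosen large enough; together with $\|\b v^0 + \delta\b v^1\|_{1,h}\le C\|p_h\|_0$ this yields \eqref{inf}.

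I expect the main obstacle to be the constant-pressure step: verifying that the face bubbles really collapse to the Crouzeix--Raviart mean-flux structure (the moment identities $\int_{F_k}\Phi_i\,dS = 0$ for $k\neq i$), and then transporting the classical CR stability constant to the bubble space with constants depending only on shape regularity. The remaining work is routine but necessary: the affine-scaling estimates bounding $\|\b v^0\|_{1,h}$ and $\|\b v^1\|_{1,h}$ (including the lower-order $L^2$ contribution in $\|\cdot\|_{1,h}$), and the bookkeeping for boundary faces, whose bubbles are excluded from $\b V_h$, matching the homogeneous boundary condition built into the CR space.
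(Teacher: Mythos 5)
Your fluctuation step is correct and essentially coincides with the paper's construction of its correction $\b v_3$: the vanishing $P_1$ face moments of $\Phi_0$ together with $\int_T\Phi_0=\tfrac25|T|\ne0$ do show that $\{\t{div}(\Phi_0\b E_m)\}_{m=1}^3$ spans the mean-zero linears on $T$ with uniformly bounded local inverses, and that these bubbles carry zero total flux. Your moment identities $\int_{F_k}\Phi_i\,dS=0$ for $k\ne i$ and $\int_{F_i}\Phi_i\,dS=3|F_i|$ are also correct (the first is not among the listed degrees of freedom, since the moment of $\Phi_i$ against $\lambda_i$ on $F_k$ is not a d.o.f., but a direct computation confirms it — and the paper's own flux formula tacitly uses it as well).

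The gap is in the constant-pressure step, exactly where you anticipated difficulty, and it is not repairable within your framework: the interior face bubbles alone are \emph{not} inf-sup stable against $P_0^{\t{dis}}$, and the transport from the Crouzeix--Raviart space to the face-bubble span is not $H^1$-bounded uniformly in $h$. If $\b w_h$ is the CR field delivered by the classical stability for a smoothly varying $p_h^0$ (e.g.\ the CR interpolant of a continuous lifting), its face fluxes are $t_j=\int_{F_j}\b w_h\cdot\b n_j=O(h^2)$, so the face-bubble field carrying the same fluxes has coefficients $s_j=t_j/(3|F_j|)=O(1)$ and $\|\nabla_h\b v^0\|_0^2\sim\sum_j s_j^2h\sim h^{-3}\cdot h=h^{-2}$, i.e.\ $\|\b v^0\|_{1,h}\sim h^{-1}\|p_h^0\|_0$. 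This is intrinsic rather than an artifact of the transport: writing $(\t{div}_h\b v^0,p_h^0)=-\sum_j t_j[p_h^0]_j$ and $\|\b v^0\|_{1,h}^2\sim h^{-3}\sum_j t_j^2$, Cauchy--Schwarz shows the best achievable ratio over the face-bubble span is $\sim h^{3/2}\bigl(\sum_j[p_h^0]_j^2\bigr)^{1/2}/\|p_h^0\|_0$, which is $O(h)$ when $p_h^0$ is the elementwise average of a smooth function; an $h$-scale bubble cannot represent an $O(1)$ smooth velocity field without an $O(h^{-1})$ gradient (the same reason the MINI element needs its conforming part). So your premise that ``the conforming $P_2$ part plays no role'' is precisely where the argument fails. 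The paper's proof avoids this by taking a continuous lifting $\b v$ of $p_h$, putting its Scott--Zhang conforming interpolant $\tilde{\b v}_h$ into the test function, and using the face bubbles only to match the \emph{residual} fluxes $\int_{F_j}(\b v-\tilde{\b v}_h)\cdot\b n_j=O(h^{3/2}|\b v|_{1,\omega_T})$, which makes the bubble correction $H^1$-bounded; after that, your central-bubble step kills the remaining mean-zero linear divergence and the two arguments merge — with $\t{div}_h\b v_h=p_h$ holding exactly, so that no pressure splitting or Young absorption is needed.
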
 

\begin{proof} Given a $p_h\in P_h\subset L^2_0(\Omega)$, there is a smooth function
   $\b v\in  H^1_0(\Omega )^3$ such that
\a{   \| \b v\|_1 \le C \|p_h\|_0 \quad \t{and}  \quad \t{div} \b v=p_h.  }
Here to avoid technicalities we assume the domain is regular enough that $\b v$ is smooth.
Let $\tilde {\b v}_h$ be the Scott-Zhang \cq\ interpolation of $\b v$ \cite{Scott-Zhang},
\a{ \tilde {\b v}_h = \sum_{i=1}^{n_c}  v_{j_i}(\b n_{i }) \psi_i,} 
where $v_{j_i}$ is the ${j_i}$-th component of $\b v$, i.e. $j_i=1$ or 2 or 3 depending on
   $\psi_i$,  and $\b n_{i }$ is either a vertex or a
  mid-edge point for the basis function $\psi_i$.
Here we mean, by the Scott-Zhang interpolation, that $v_{j_i}$ is a proper local averaging
   value where $\b v$ does not have a nodal value.
The interpolation is stable \cite{Scott-Zhang} that
\a{ \| \tilde {\b v}_h \|_{1,h} \le C  \|  {\b v}  \|_{1}. }
Let a \ncb \ interpolation of $\b v$ be
\a{ \b v_1 = \tilde {\b v}_h + \b v_2, \ \t{where} \ 
    \b v_2=\sum_{j=n_1+1}^{n_{nc}}  s_{j}\phi_j   }
and, noticing that $\int_{F_j} \Phi_j(\lambda_{j_1}+\lambda_{j_2}+\lambda_{j_3})dS=
  \int_{F_j} \Phi_j dS=3|F_j|$ on $F_j$,
 \a{  s_{n_1+j} =\frac{\int_{F_j} (\b v- \tilde {\b v}_h)\cdot \b n_j ds }
                        {3\int_{F_j}  ds }, \ j=1,\dots, n_f. }
Here $\b n_j$ is the fixed unit normal vector on an interior triangle
    $F_j$, used in definition of $\phi_j$.
Then \a{  \| {\b v}_2 \|_{1,h} \le C ( \| \tilde {\b v}_h \|_{1,h}
                   +  \|  {\b v}- \tilde {\b v}_h \|_{1,h})
        \le C \|  {\b v}  \|_{1}. }

On a tetrahedron $T\in\mathcal{T}_h$,  let 
\a{ p_1&=\t{div}(\b v-\b v_1)\in P_1(T), \\
    \int_{T} p_1 d\b x &= \int_{\partial T} (\b v-\b v_1)\cdot \b n_T dS =0, }
where $\b n_T$ is the unit outward normal vector on each face of $T$.
Let $p_1=a_1\lambda_1+a_2\lambda_2+a_3\lambda_3+a_4\lambda_4$. By
\a{  \int_T p_1 d\b x &= \frac {h^3}{24} (a_1+a_2+a_3+a_4) = 0, \\
             p_1 & =a_1\lambda_1+a_2\lambda_2+a_3\lambda_3+(-a_1-a_2-a_3)\lambda_4\\
                 &= a_1(\lambda_1-\lambda_4)+a_2(\lambda_2-\lambda_4)+a_3(\lambda_3-\lambda_4).
  }
On the other side, by the chair rule, we have
\a{ \nabla \Phi_0 &= \p{-\b n_1/h_1 &
                       -\b n_2/h_2 &
                       -\b n_3/h_3}  \nabla_{\lambda}\Phi_0 \\
                  &= M^T \p{ -8(\lambda_1-\lambda_4)\\-8(\lambda_2-\lambda_4)\\
                       -8(\lambda_3-\lambda_4)}, }
where $M$ is the Jacobian matrix for the mapping, 
   $\b n_i$ is the outward unit normal vector to face $F_i$, and
    $h_i$ is the distance from $\b x_i$ to triangle $F_i$.
$\det(M^{-1})=6|T|\ne0$.   On $T$,  let a \ncb-bubble be 
\a{ \b v_3|_T = \sum_{i=1}^3 c_i \phi_i, \ \t{where} \ 
      \p{c_1\\c_2\\c_3}=-\frac 18 M^{-1}\p{a_1\\a_2\\a_3}. }
Together, we construct a
\a{ \b v_h = \b v_1+\b v_3 }
such that
\a{ \t{div}\b v_h|_T &= \t{div}(\tilde{\b v}+\b v_2) +
                 \t{div}\b v_3=p_h-p_1+ \t{div}\b v_3=p_h, 
        \ T\in \mathcal{T}_h, \\
     \|\b v_h\|_{1,h} &\le \|\b v_1\|_{1,h}+ \| \b v_3\|_{1,h} \le C
          \|\b v \|_{1 } + C\|\b v-\b v_1\|_{1,h}\\ &\le   C
          \|\b v \|_{1 } \le C\|p_h\|_0.  }
The lemma is proved.
\end{proof}

\begin{lemma}The linear system of finite element equations \eqref{f-e-1}--\eqref{f-e-2}
   has a unique solution
     $(\b u_h,p_h)\in \b V_h \times P_h$, where $P_h$ and $\b V_h$ are defined in \eqref{P-h} and \eqref{V-h}, respectively.
\end{lemma}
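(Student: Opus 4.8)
The plan is to treat this as the standard well-posedness statement for a discrete saddle-point problem, reducing it to the inf-sup condition \eqref{inf} already established together with a discrete coercivity estimate. Since \eqref{f-e-1}--\eqref{f-e-2} is a square linear system on the finite-dimensional space $\b V_h\times P_h$, unique solvability is equivalent to triviality of the homogeneous system, so it suffices to take $\b f=\b 0$ and show $(\b u_h,p_h)=(\b 0,0)$. First I would run the energy argument: choosing $\b v_h=\b u_h$ in \eqref{f-e-1} and $q_h=p_h$ in \eqref{f-e-2}, the second equation gives $(\t{div}_h\b u_h,p_h)=0$, and substituting into the first yields
\a{ \|\nabla_h\b u_h\|_0^2=(\nabla_h\b u_h,\nabla_h\b u_h)=0, }
so $\nabla_h\b u_h=\b 0$ and $\b u_h$ is piecewise constant on $\mathcal T_h$.

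Next I would upgrade $\nabla_h\b u_h=\b 0$ to $\b u_h=\b 0$ through a discrete Poincar\'e--Friedrichs argument, that is, by showing $\|\nabla_h\cdot\|_0$ is a norm on $\b V_h$. Across each interior face $F$ the constant jump $[\b u_h]$ satisfies $\int_F[\b u_h]\,p_1\,dS=0$ for all $p_1\in P_1(F)$, because the \cq\ component is continuous and each \ncb-bubble has $P_1$-orthogonal face moments by \eqref{dof}; taking $p_1=1$ forces $[\b u_h]=\b 0$, so $\b u_h$ reduces to a single constant vector on the connected domain $\Omega$. On each boundary face the analogous moment conditions, together with the $H^1_0$ conforming summand and the exclusion of boundary bubbles in \eqref{V-h}, give $\int_F\b u_h\,p_1\,dS=0$, which forces that constant to vanish, hence $\b u_h=\b 0$. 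Finally, with $\b u_h=\b 0$ equation \eqref{f-e-1} reduces to $(\t{div}_h\b v_h,p_h)=0$ for every $\b v_h\in\b V_h$, and the inf-sup bound \eqref{inf} gives $C\|p_h\|_0\le\sup_{\b v_h\in\b V_h}(\t{div}_h\b v_h,p_h)/\|\b v_h\|_{1,h}=0$, so $p_h=0$.

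I expect the main obstacle to be the discrete Poincar\'e--Friedrichs step. The energy identity and the inf-sup application are routine once \eqref{inf} is available, but one must verify carefully that the two weak-continuity features of $\b V_h$ --- the $P_1$-orthogonality of interior face jumps and the corresponding weak homogeneous boundary conditions encoded in \eqref{dof} and the conforming $H^1_0$ summand --- are exactly what rules out nonzero piecewise-constant elements of the kernel and thereby makes $\|\nabla_h\cdot\|_0$ a norm on $\b V_h$.
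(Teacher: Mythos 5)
Your proposal is correct and follows essentially the same route as the paper: test with $(\b u_h,p_h)$ to get $\|\nabla_h\b u_h\|_0=0$, conclude $\b u_h=\b 0$, then invoke the inf-sup condition \eqref{inf} to force $p_h=0$. The only difference is that you spell out the discrete Poincar\'e--Friedrichs step (interior jumps and boundary traces of a piecewise-constant $\b V_h$-function have vanishing $P_1$ face moments, hence vanish) which the paper merely asserts via ``$\b u_h$ has a zero boundary condition,'' so your version is, if anything, more complete.
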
  

\begin{proof} For a square system of finite equations,  we only need to prove the uniqueness.
  Let $\b f=\b 0$ in \eqref{f-e-1}.
Letting $\b v_h=\b u_h$ in \eqref{f-e-1} and $q_h=p_h$ in \eqref{f-e-2}, 
      we add the two equations \eqref{f-e-1}    and \eqref{f-e-2}
   to get
\a{  \|\nabla_h \b u_h \|_0 = 0.  } 
$\b u_h$ would be constant vectors on each $T$.  Because $\b u_h\in\ncb$ and it has a
    zero boundary
   condition,  $\b u_h=0$.   
  By the inf-sup condition \eqref{inf} and \eqref{f-e-1}, we have
\a{ \|p_h\|_0 & \le \frac 1C \sup_{\b v_h\in \b V_h} 
             \frac { (\t{div}_h \b v_h, p_h) }{ \|\b v_h\|_{1,h} } \\
              & = \frac 1C \sup_{\b v_h\in \b V_h} 
             \frac { (\nabla_h \b u_h, \nabla_h  \b v_h) }{ \|\b v_h\|_{1,h} } \\
              & = \frac 1C \sup_{\b v_h\in \b V_h} 
             \frac { 0 }{ \|\b v_h\|_{1,h} } =0. } 
The lemma is proved.
\end{proof}
         
\def\bi#1{\langle #1 \rangle_{\mathcal{F}_h}} 
\section{The convergence}

\begin{theorem}
Let $(\b{u},p)\in (H^3(\Omega) \cap H^1_0 (\Omega))^3 \times (H^2 (\Omega)\cap L^2_0 (\Omega))$
    be the solution of the stationary Stokes problem \eqref{e1-1}--\eqref{e1-3}. 
Let $(\b{u}_h,p_h)\in \b V_h \times P_h $
    be the solution of the finite element problem   \eqref{f-e-1}--\eqref{f-e-2}.
It holds that   
\an{ \label{h1} 
   \|\b{u}-\b{u}_{h}\|_{1,h} +\|p-p_{h}\|_{0} & \le 
      C h^2 ( |\b{u} |_{3}+|p|_{2} ). } 
\end{theorem}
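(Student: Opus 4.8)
The plan is to establish the optimal-order error estimate \eqref{h1} by following the standard theory for nonconforming mixed finite element methods, which rests on three ingredients already available in the excerpt: the inf-sup stability of Lemma~3.1, the approximation properties of the finite element spaces, and a consistency (or Galerkin-orthogonality) analysis that controls the nonconformity error. Because $\b V_h$ is nonconforming, the solution does not satisfy the continuous variational form exactly, so a second Strang lemma argument is needed rather than a pure Galerkin projection.

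First I would set up the abstract framework. Writing $a_h(\b u_h,\b v_h)=(\nabla_h\b u_h,\nabla_h\b v_h)$ and $b_h(\b v_h,q_h)=(\t{div}_h\b v_h,q_h)$, one checks that $a_h$ is coercive on the discrete kernel and bounded in the $\|\cdot\|_{1,h}$ norm, while $b_h$ satisfies the inf-sup condition \eqref{inf}. By the standard Brezzi theory for nonconforming saddle-point problems (a second Strang lemma), the total error splits as
\a{ \|\b u-\b u_h\|_{1,h}+\|p-p_h\|_0 \le C\Big( \inf_{\b w_h\in\b V_h}\|\b u-\b w_h\|_{1,h}
    + \inf_{q_h\in P_h}\|p-q_h\|_0 + E_{\t{nc}} \Big), }
where $E_{\t{nc}}$ is the consistency error measuring how far $(\b u,p)$ is from satisfying the discrete equations. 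The two infimum terms are the approximation errors: for $\b u\in H^3$ the \cq\ part of $\b V_h$ gives an $O(h^2)$ bound in the broken $H^1$-norm, and for $p\in H^2$ the discontinuous $P_1$ space $P_h$ gives an $O(h^2)$ bound in $L^2$. These are routine Bramble-Hilbert estimates, so I would state them and move on.

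The main obstacle is bounding the consistency error $E_{\t{nc}}$. Inserting the exact solution into the discrete form and integrating by parts elementwise produces boundary terms summed over all faces,
\a{ E_{\t{nc}}=\sup_{\b v_h\in\b V_h}\frac{1}{\|\b v_h\|_{1,h}}
   \sum_{F\in\mathcal F_h}\int_F \big(\partial_{\b n}\b u - p\,\b n\big)\cdot[\b v_h]\,dS, }
where $[\b v_h]$ is the jump across $F$. The crucial point is that the nonconforming bubbles were designed precisely so the jump of $\b v_h$ is orthogonal to $P_1$ on each face (the dofs \eqref{dof} enforce $\int_F[\b v_h]\lambda_j\,dS$-type conditions), so I can subtract any face-wise $P_1$ (in fact the mean or a linear interpolant) of the flux $\partial_{\b n}\b u-p\,\b n$ without changing the integral. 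Replacing $\partial_{\b n}\b u-p\,\b n$ by its difference from a suitable $P_1$ projection on $F$, then applying the trace inequality and a scaling/Bramble-Hilbert argument on each face, yields the $O(h^2)$ factor against $|\b u|_3+|p|_2$, while the $\|\b v_h\|_{1,h}$ in the numerator is absorbed by the supremum. I would verify carefully that the seven retained bubbles, together with the \cq\ functions, give enough vanishing-moment structure on every interior face to make this orthogonality work, since that is exactly where a naive choice of bubbles would fail and is the heart of why this particular element is consistent at optimal order. Combining the three estimates gives \eqref{h1}, completing the proof.
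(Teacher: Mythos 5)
Your proposal is correct and follows essentially the same route as the paper: both rest on the approximation error of $I_h\b u$ and $\Pi_h p$, the consistency error controlled by the fact that $[\b v_h]$ is orthogonal to $P_1(F)$ on every face (so the flux $\partial_{\b n}\b u - p\,\b n$ can be replaced by its deviation from a facewise $P_1$ projection and bounded via the trace inequality), and the inf-sup condition for the pressure. The only difference is packaging — the paper carries out the second-Strang-type argument by hand, testing with $\b v_h = I_h\b u-\b u_h$, $q_h=\Pi_h p-p_h$ and estimating five terms explicitly, rather than invoking the abstract Brezzi/Strang framework — and the orthogonality check you flag does go through, since the conforming part has zero jump and both families of retained bubbles have vanishing $P_1$ moments (or matching moments) on all faces.
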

\begin{proof} Multiplying \eqref{e1-1} by $\b v_h\in \b V_h$ and
  doing integration by parts,  we get
\an{\label{i1}  (\b f,\b v_h) &=
     (\nabla\b u,\nabla_h \b v_h) -(\t{div}_h\b v_h, p)
     - \bi{\nabla\b u \cdot\b n,[\b v_h]} + \bi{p,[\b v_h]\cdot\b n},
} where $\b n$ is a fixed unit normal vector on face triangle $F$, 
    $\mathcal{F}_h $ is the set of face triangles in the grid $\mathcal{T}_h $ and
\an{\label{b-i} \bi{f,g} = \sum_{F\in\mathcal{F}_h} \langle f,g \rangle_F.  }
Let $I_h \b u$ be the nodal interpolation of the smooth function $\b u$ in the \cq \ space.
Let $\Pi_h p$ be the element-wise $L^2$-projection of $p$ on the space  $P_h$ on
   grid $\mathcal{T}_h$.
Let $\Pi_h^b p$ be the $L^2$-projection of $p$ on the $P^{\t{dis}}_1(\mathcal{F}_h ) $ space
  i.e., $\Pi_h^b p|_F\in P_1(F)$ satisfying
\a{ \langle \Pi_h^b p, q \rangle_F = \langle p, q  \rangle_F \quad
    \forall q \in P_1(F), \ F\in \mathcal{F}_h. }
 Subtracting \eqref{i1} from \eqref{f-e-1}, we get
\an{\label{i1-2} &\quad \ 
     (\nabla_h (\b u-\b u_h), \nabla_h \b v_h )- (\t{div}_h\b v_h, p-p_h) \\
   \nonumber  &=  \bi{\nabla{\b u}\cdot\b n,[\b v_h]} -\bi{p,[\b v_h]\cdot\b n},
} and
\an{\label{i2} 
 &\quad \ (\nabla_h (I_h\b u-\b u_h), \nabla_h \b v_h )- (\t{div}_h\b v_h, \Pi_h p-p_h)\\
    \nonumber &= 
     (\nabla_h (I_h\b u-\b u), \nabla_h \b v_h )- (\t{div}_h\b v_h, \Pi_h p-p)\\
     \nonumber  &\quad \ +\bi{\nabla{\b u}\cdot\b n,[\b v_h]} -\bi{p,[\b v_h]\cdot\b n}.
   } By \eqref{e1-2} and \eqref{f-e-2}, we have
\an{\nonumber (\t{div}_h (\b u -\b u_h), q_h) &= 0, \\
  \label{i3}  (\t{div}_h (I_h\b u -\b u_h), q_h) &=  (\t{div}_h (I_h\b u -\b u ), q_h).
} 
Letting $\b v_h=I_h\b u -\b u_h$ in \eqref{i2} and $q_h=\Pi_h p -p_h$ in \eqref{i3},
  we add the two equations to get
\an{\label{i4} &\quad \ 
 (\nabla_h (I_h\b u-\b u_h), \nabla_h (I_h\b u-\b u_h))\\
    \nonumber &= 
     (\nabla_h (I_h\b u-\b u), \nabla_h (I_h\b u-\b u_h))
     - (\t{div}_h(I_h\b u-\b u_h), \Pi_h p-p)\\
     \nonumber  &\quad \ +\bi{\nabla{\b u}\cdot\b n,[I_h\b u-\b u_h]}
       -\bi{p,[I_h\b u-\b u_h]\cdot\b n}\\
     \nonumber  &\quad \ +(\t{div}_h (I_h\b u -\b u ),\Pi_h p -p_h). } 
We estimate the 5 terms on the right-hand side of \eqref{i4}.
\an{\label{i4-1} |(\nabla_h (I_h\b u-\b u), \nabla_h (I_h\b u-\b u_h))|
          &\le C \|\nabla_h (I_h\b u-\b u)\|_0 \; \| \nabla_h (I_h\b u-\b u_h)\|_0 \\
 \nonumber&\le C h^2 |\b u|_3 \; \|\nabla_h (I_h\b u-\b u_h)\|_0  \\
  \nonumber&\le C h^4 |\b u|_3 ^2  + \|\nabla_h (I_h\b u-\b u_h)\|_0^2 .  }
For the second term on the right-hand side of \eqref{i4}, we have
\an{\label{i4-2} |(\t{div}_h(I_h\b u-\b u_h), \Pi_h p-p)|
           &\le  \| \t{div}_h(I_h\b u-\b u_h)\|_0 \; \| \Pi_h p-p\|_0 \\
  \nonumber&\le Ch^2 |p|_2 \; \|\nabla_h (I_h\b u-\b u_h)\|_0\\
  \nonumber&\le C h^4 |p|_2^2  + \frac 12 \|\nabla_h (I_h\b u-\b u_h)\|_0^2.  }
We bound the third  term on the right-hand side of \eqref{i4} as,  
\an{\label{i4-3} |\bi{\nabla{\b u}\cdot\b n,[I_h\b u-\b u_h]}|
           &=| \bi{2(\nabla{\b u}- \Pi_h^b\nabla{\b u})\cdot\b n,[I_h\b u-\b u_h]}| \\
  \nonumber&\le C   \|\nabla{\b u}-\Pi_h  \nabla{\b u} \|_0\; |I_h\b u-\b u_h|_{1,h} \\
  \nonumber&\le C h^2  | \b u |_3 \; \|\nabla_h (I_h\b u-\b u_h)\|_0 \\
  \nonumber&\le C h^4 |\b u|_3 ^2  + \|\nabla_h (I_h\b u-\b u_h)\|_0^2 . 
    } Here (and later) we used a trace-inequality, cf. \cite{Zhang-Jump}.
Similarly, we estimate the 4th term on the right-hand side of \eqref{i4},
\an{\label{i4-4} | \bi{p,[I_h\b u-\b u_h]\cdot\b n}| 
           &=|\bi{p-\Pi_h^b p,[I_h\b u-\b u_h]\cdot\b n} | \\
  \nonumber&\le C   \|p-\Pi_h p \|_0\; |I_h\b u-\b u_h|_{1,h} \\
  \nonumber&\le C h^2  | p |_2 \; \|\nabla_h (I_h\b u-\b u_h)\|_0 \\
  \nonumber&\le C h^4 |p|_2^2  + \frac 12 \|\nabla_h (I_h\b u-\b u_h)\|_0^2 . 
    }
For the last term on the right-hand side of \eqref{i4}, we have
\an{\label{i4-5} |(\t{div}_h (I_h\b u -\b u ),\Pi_h p -p_h)|
            &\le  \| \t{div}_h(I_h\b u-\b u )\|_0   \; \| \Pi_h p-p\|_0 \\
  \nonumber&\le Ch^4 |\b u|_3 |p|_2.  }
Substituting the last five bounds \eqref{i4-1}--\eqref{i4-5} in \eqref{i4}, it follows that
\an{\label{i5} \|\nabla_h (I_h\b u-\b u_h)\|_0^2\le C h^4 |\b u|_3 ^2 +  C h^4 |p|_2^2  
     + Ch^4 |\b u|_3 |p|_2. }
By  \eqref{i5}, we have
\a{ |\b u-\b u_h|_{1,h} & \le |I_h \b u-\b u_h|_{1,h} + |I_h\b  u-\b u |_{1,h} \\
     & \le C \|\nabla_h (I_h\b u-\b u_h)\|_0 + Ch^2 |\b u|_3 \\
     & \le Ch^2 |p|_2  + Ch^2 |\b u|_3. } 
\def\gdu{\nabla_h (I_h\b u-\b u_h)}\def\gdv{\nabla_h  \b v_h}

Next we estimate the pressure error in \eqref{h1}.
 By \eqref{i5}, \eqref{i4-1},  
     \eqref{i4-5} and \eqref{i4-3}, we get from \eqref{i2} that
\a{ |(\t{div}_h\b v_h, \Pi_h p-p_h)|
 &= | (\gdu, \gdv )\\
     &\quad \ - 
      (\nabla_h (I_h\b u-\b u), \gdv)
        + (\t{div}_h\b v_h, \Pi_h p-p)\\
     &\quad \ - \bi{\nabla{\b u}\cdot\b n,[\b v_h]}
         +\bi{p,[\b v_h]\cdot\b n}|\\
   &\le  Ch^2(|\b u|_3+|p|_2) \|\gdv\|_0  \\
    &\quad \ +Ch^2 |\b u|_{3}\; \|\gdv\|_0 + C h^2|p|_2\; \|\gdv\|_0  \\
    &\quad \  + Ch^2 |\b u|_{3}\; \|\gdv\|_0  + C h^2|p|_2\; \|\gdv\|_0.
   } 
By the inf-sup condition \eqref{inf}, we obtain
\a{ \|\Pi_h p-p_h \|_0 &\le C \inf_{\b v_h\in \b V_h}
   \frac{ (\t{div}_h\b v_h, \Pi_h p-p_h) } { \|\b v_h \|_{1,h} }\\
   &\le C \inf_{\b v_h\in \b V_h}
   \frac{ Ch^2(|\b u|_3+|p|_2)\|\gdv\|_0 } { C \|\gdv\|_0 }\\
   &= Ch^2(|\b u|_3+|p|_2). }
\eqref{h1} follows the triangle inequality.
\end{proof}

\begin{theorem}
Let $(\b{u},p)\in (H^3(\Omega) \cap H^1_0 (\Omega))^d \times (H^2 (\Omega)\cap L^2_0 (\Omega))$
    be the solution of the stationary Stokes problem \eqref{e1-1}--\eqref{e1-3}. 
Let $(\b{u}_h,p_h)\in \b V_h \times P_h $
    be the solution of the finite element problem   \eqref{f-e-1}--\eqref{f-e-2}.
It holds that   
\an{ \label{l2}
   \|\b u -\b u_{h}\|_{0}  & \le  C h^3 \{ |\b{u} |_{3}+|p|_{2}\} . } 
\end{theorem}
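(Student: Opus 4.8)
The final statement is an $L^2$ velocity estimate, and the natural route is a duality (Aubin--Nitsche) argument that lifts the $H^1$ bound of the preceding theorem by one power of $h$. The plan is to introduce the dual Stokes problem: find $(\b\phi,\xi)\in\b V\times P$ with $-\Delta\b\phi+\nabla\xi=\b u-\b u_h$ and $\d\b\phi=0$ in $\Omega$, $\b\phi=\b 0$ on $\partial\Omega$. As in Lemma 3.1, I assume the domain is regular enough that elliptic regularity gives $|\b\phi|_2+|\xi|_1\le C\|\b u-\b u_h\|_0$. Writing the squared norm as $\|\b u-\b u_h\|_0^2=(\b u-\b u_h,-\Delta\b\phi+\nabla\xi)$ and integrating by parts element by element, using that $\b\phi\in H^2$ and $\xi\in H^1$ are continuous across faces while $\b u-\b u_h$ carries the nonconforming jumps, I obtain the identity
\a{ \|\b u-\b u_h\|_0^2 &= (\nabla_h(\b u-\b u_h),\nabla\b\phi) - (\t{div}_h(\b u-\b u_h),\xi)\\ &\quad\ - \bi{[\b u-\b u_h],\nabla\b\phi\cdot\b n} + \bi{[\b u-\b u_h]\cdot\b n,\xi}. }
Every term on the right must be shown to be $O(h^3)\,\|\b u-\b u_h\|_0$.

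For the two volume terms I would exploit Galerkin orthogonality against the conforming interpolant. Splitting $(\nabla_h(\b u-\b u_h),\nabla\b\phi)=(\nabla_h(\b u-\b u_h),\nabla(\b\phi-I_h\b\phi))+(\nabla_h(\b u-\b u_h),\nabla_h I_h\b\phi)$, the second piece equals $(\t{div}_h I_h\b\phi,p-p_h)$ by \eqref{i1-2} with the continuous test function $I_h\b\phi$, whose jumps vanish so the consistency right-hand side is zero. Since $\d\b\phi=0$, this is $(\t{div}_h(I_h\b\phi-\b\phi),p-p_h)$, bounded by $Ch|\b\phi|_2\,\|p-p_h\|_0$, which is $O(h^3)$ by the pressure estimate of the previous theorem. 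The first piece is controlled by $|\b u-\b u_h|_{1,h}\,\|\nabla(\b\phi-I_h\b\phi)\|_0\le Ch^2(|\b u|_3+|p|_2)\,Ch|\b\phi|_2$. For $(\t{div}_h(\b u-\b u_h),\xi)$ I use that $(\t{div}_h(\b u-\b u_h),q_h)=0$ for all $q_h\in P_h$ by \eqref{e1-2} and \eqref{f-e-2}, so it equals $(\t{div}_h(\b u-\b u_h),\xi-\Pi_h\xi)\le |\b u-\b u_h|_{1,h}\,Ch|\xi|_1$, again $O(h^3)$.

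The jump terms are where the nonconforming structure is essential. Because $\b u$ is continuous and $\b u-\b u_h\in\b V_h$, the jump $[\b u-\b u_h]$ is orthogonal to $P_1$ on every face; hence I may subtract the face projections $\Pi_h^b\nabla\b\phi$ and $\Pi_h^b\xi$ for free and write $\bi{[\b u-\b u_h],\nabla\b\phi\cdot\b n}=\bi{[\b u-\b u_h],(\nabla\b\phi-\Pi_h^b\nabla\b\phi)\cdot\b n}$, and likewise for the $\xi$ term. A trace inequality (as in \eqref{i4-3}) then bounds these by $Ch|\b\phi|_2\,|\b u-\b u_h|_{1,h}$ and $Ch|\xi|_1\,|\b u-\b u_h|_{1,h}$, both $O(h^3)$. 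Collecting all five contributions yields $\|\b u-\b u_h\|_0^2\le Ch^3(|\b u|_3+|p|_2)(|\b\phi|_2+|\xi|_1)$, and the regularity bound $|\b\phi|_2+|\xi|_1\le C\|\b u-\b u_h\|_0$ gives \eqref{l2} after dividing by $\|\b u-\b u_h\|_0$.

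The main obstacle is the balance of powers of $h$ in the jump terms: the dual solution is only $H^2\times H^1$, so its approximation supplies just one power of $h$, and the whole argument hinges on the nonconforming orthogonality $\int_F[\b u-\b u_h]\,q\,dS=0$ for $q\in P_1(F)$, together with a sharp trace inequality, to extract exactly that power and combine it with the $h^2$ from the primal $H^1$ estimate.
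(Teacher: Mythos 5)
Your proof is correct, but it takes a genuinely different route from the paper's. Both arguments start from the same dual problem, the same regularity assumption $|\b \phi|_2+|\xi|_1\le C\|\b u-\b u_h\|_0$, and the same elementwise integration by parts producing two volume terms and two jump terms. The paper then introduces the \emph{discrete} dual solution $(\b w_h,r_h)\in\b V_h\times P_h$, subtracts the primal error equation \eqref{i1-2} tested with $\b w_h$, and estimates the resulting five terms; in particular its leading term $\|\nabla_h(\b w-\b w_h)\|_0\,\|\nabla_h(\b u-\b u_h)\|_0$ tacitly invokes an $O(h)$ energy estimate for the dual discrete problem under only $H^2\times H^1$ regularity, a variant of Theorem 4.1 that is not separately proved. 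You avoid the discrete dual solution altogether: you split $(\nabla_h(\b u-\b u_h),\nabla\b\phi)$ through the conforming interpolant $I_h\b\phi$, use Galerkin orthogonality \eqref{i1-2} with a \emph{conforming} test function (so the consistency terms vanish), and convert the residual piece into $(\t{div}(I_h\b\phi-\b\phi),p-p_h)$, which is $O(h^3)$ by the already-established pressure bound; the divergence and jump terms are handled by the $P_1$-orthogonality of the jumps and a trace inequality, exactly as in \eqref{i4-3}--\eqref{i4-4}. Your version is more self-contained, reusing only the proven $O(h^2)$ bounds for $|\b u-\b u_h|_{1,h}$ and $\|p-p_h\|_0$; the paper's version is the more symmetric ``primal--dual'' formulation. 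One small imprecision: $\b u-\b u_h\notin\b V_h$, so the face orthogonality of $[\b u-\b u_h]$ should be justified by $[\b u-\b u_h]=-[\b u_h]$ with $\b u_h\in\b V_h$ (and the continuity of $\b u$), which is what you actually use; also note the paper observes that $(\t{div}_h(\b u-\b u_h),\xi)$ vanishes identically, whereas your $O(h^3)$ bound for it, while weaker, still suffices.
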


 \def\iuh{\b u-\b u_h}

\begin{proof}
We will use the duality argument.  Let 
  $\b w \in H^1_0(\Omega)^d $ and $r \in L^2_0(\Omega)$ such that
  \an{\label{e-w} &&  - \t{div}(\nabla \b w)    + \nabla r
            &= \iuh && \hbox{in } \Omega, && \\
     \label{e-w2}   && \t{div} \b w  &= 0 && \hbox{in } \Omega.  } 
We assume the following regularity,
\an{\label{regularity} |\b w|_2 + |r|_1 \le C \|\iuh\|_0.  }
Multiplying \eqref{e-w} by $(\iuh)$ and
  doing integration by parts,  we get
\an{\label{j1} &\quad \ (\iuh,\iuh) \\
  \nonumber &=
     (\nabla \b w , \nabla_h (\iuh))-(\t{div}_h(\iuh), r)\\
  \nonumber &\quad \
     - \bi{\nabla{\b w}\cdot\b n,[\iuh]} + \bi{r,[\iuh]\cdot\b n} \\
  \nonumber &=
     (\nabla\b w , \nabla_h (\iuh))-(\t{div}_h(\iuh), r)\\
  \nonumber &\quad \
     - \bi{\nabla{\b w}\cdot\b n,[I_h\iuh]} + \bi{r,[I_h\iuh]\cdot\b n},
} where $\bi{\cdot,\cdot} $ is defined in \eqref{b-i}.
Let $\b w_h\in \b V_h$ and  $r_h\in P_h$ be the finite element solution for problem
    \eqref{e-w}--\eqref{e-w2}, satisfying 
\an{\label{f-w} &&
    (\nabla_h   \b w_h , \nabla_h \b v_h )
  - (\t{div}_h \b v_h,r_h) &=(\iuh,\b v_h)  
             &   \forall \b v_h & \in \b V_h, && \\
        && (\t{div}_h \b w_h, q_h)  &=0   &   \forall q_h & \in P_h. 
   \label{f-w2} }  
Letting $\b v_h=\b w_h$ in \eqref{i1-2}, we get
 \an{\label{j1-2}  
    &\quad \ (\nabla_h (\b u-\b u_h), \nabla_h \b w_h)  \\
  \nonumber   &= (\t{div}\b w_h, \Pi_h p-p_h) +
     \bi{\nabla{\b u}\cdot\b n,[\b w_h]} -\bi{p,[\b w_h]\cdot\b n} \\
  \nonumber   &=  
     \bi{\nabla{\b u}\cdot\b n,[\b w_h-I_h\b w_h]} -\bi{p,[\b w_h-I_h\b w_h]\cdot\b n},
} where we use the fact $\t{div}_h\b w_h|_T \in P_1(T)$ and \eqref{f-w2}.

Subtracting \eqref{j1-2} from \eqref{j1},   we get
\an{\label{j3} \|\iuh\|_0^2  &=
     (\nabla_h (\b w-\b w_h), \nabla_h (\iuh))\\ 
  \nonumber &\quad \
     - \bi{\nabla{\b w}\cdot\b n,[I_h\iuh]} + \bi{r,[I_h\iuh]\cdot\b n}\\
  \nonumber   &   \quad \ +
     \bi{\nabla{\b u}\cdot\b n,[\b w_h-I_h\b w_h]} -\bi{p,[\b w_h-I_h\b w_h]\cdot\b n},
   } where we use the fact $(\t{div}_h(\iuh), r )=(\t{div}_h \b u_h , r)
     =(\t{div}_h \b u_h, r-\Pi_h r)=0$, because $\t{div}_h \b u_h|_T\in P_1(T)$.
The five terms on the right-hand side of \eqref{j3} are estimated similarly as those
   five terms on the right-hand side of \eqref{i4}. 
To be specific, they are estimated by the methods in \eqref{i4-1}, \eqref{i4-3}, \eqref{i4-4}, 
   \eqref{i4-3} and \eqref{i4-4}, respectively.
We have, by the quasi-optimal error bound \eqref{h1} and
    the assumed elliptic regularity \eqref{regularity}, from \eqref{j3}, 
\an{\label{j4} \|\iuh\|_0^2  &\le  \|\nabla_h (\b w-\b w_h)\|_0\; 
               \|\nabla_h (\iuh)\|_0 +C h |\b w|_2\;h^2|\b u|_3  \\ 
  \nonumber &\quad \ + Ch|r|_1\;h^2|\b u|_3  +C h^2 |\b u|_3\;h|\b w|_2 + Ch^2|p|_2\;h|\b w|_2\\
   \nonumber &\le C h^3 ( |\b u|_3 + |p|_2) ( |\b w|_2 +  |r|_1) \\
     \nonumber &\le C h^3 ( |\b u|_3 + |p|_2) \|\iuh\|_0. 
   }
\eqref{l2} follows \eqref{j4}. 
\end{proof}

\section{Numerical tests}\lab{s-numerical}

We solve the following 3D Stokes problem on a unit-cube domain $\Omega=(0,1)^3$: 
    Find $\b u \in \b V=H^1_0(\Omega)^3$ and  $p \in P=L_0^2(\Omega)$  such that  
\an{ \ad{   (\nabla\b u, \nabla\b v )- (\d \b v ,p )
     &=\b f 
             && \forall \b v  \in \b V , \\
        (\d \b u, q)  &=0 && \forall q  \in P,} \label{s4} }
where $\b f$ is chosen so that the exact solution is
\an{ \ad{   \b u&=\p{g_{y}-g_z\\ -g_{x}\\ g_x }  \quad \t{and} \quad
                   p=\frac 19 g_{xy}, \quad \t{where} \\
           g&=2^{12} x^2(1-x)^2y^2(1-y)^2z^2(1-z)^2. }
   \label{s3} }

\begin{figure}[ht]\begin{center}\begin{picture}(300,120)(10,0)
\put(0, -50){\includegraphics[width=1.6in]{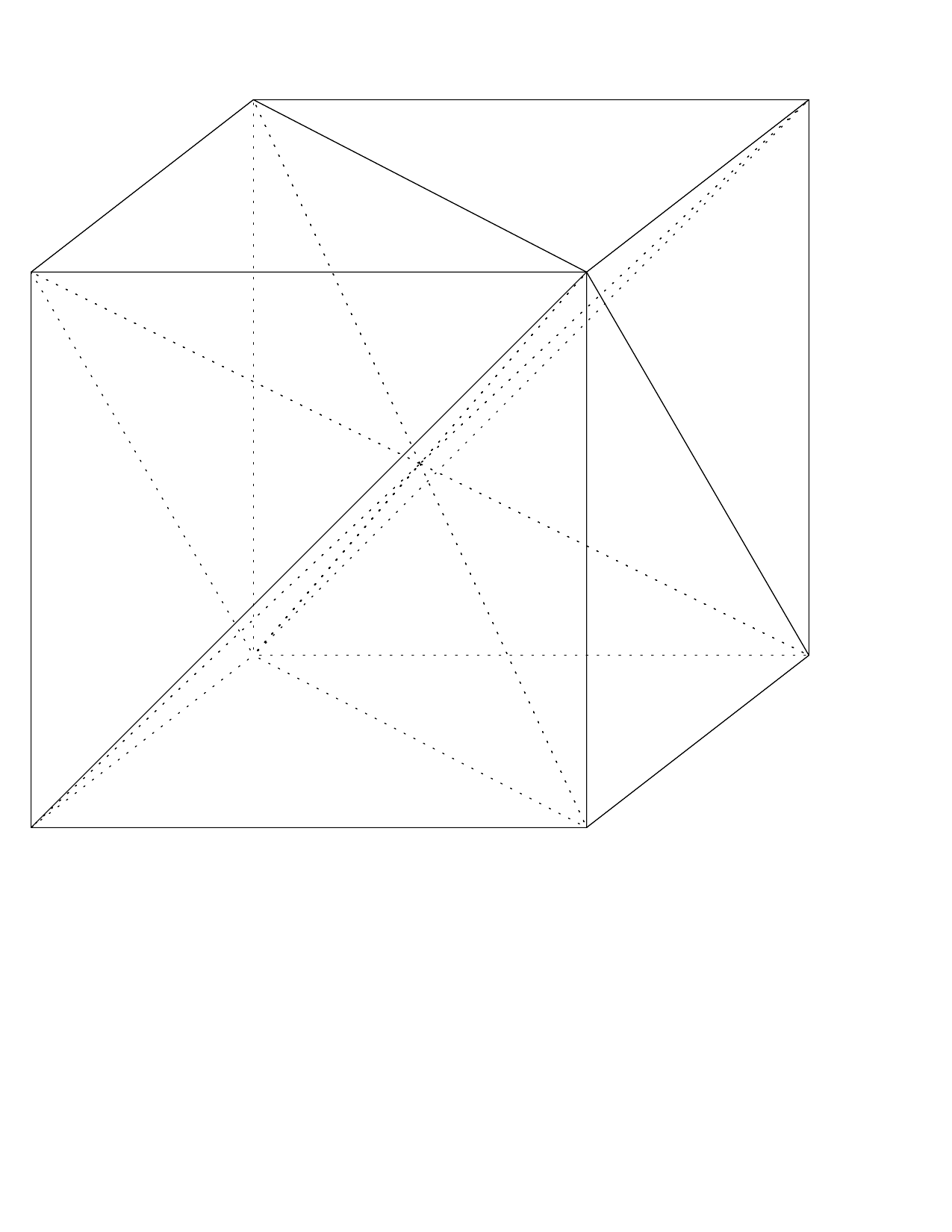}}
\put(100,-50){\includegraphics[width=1.6in]{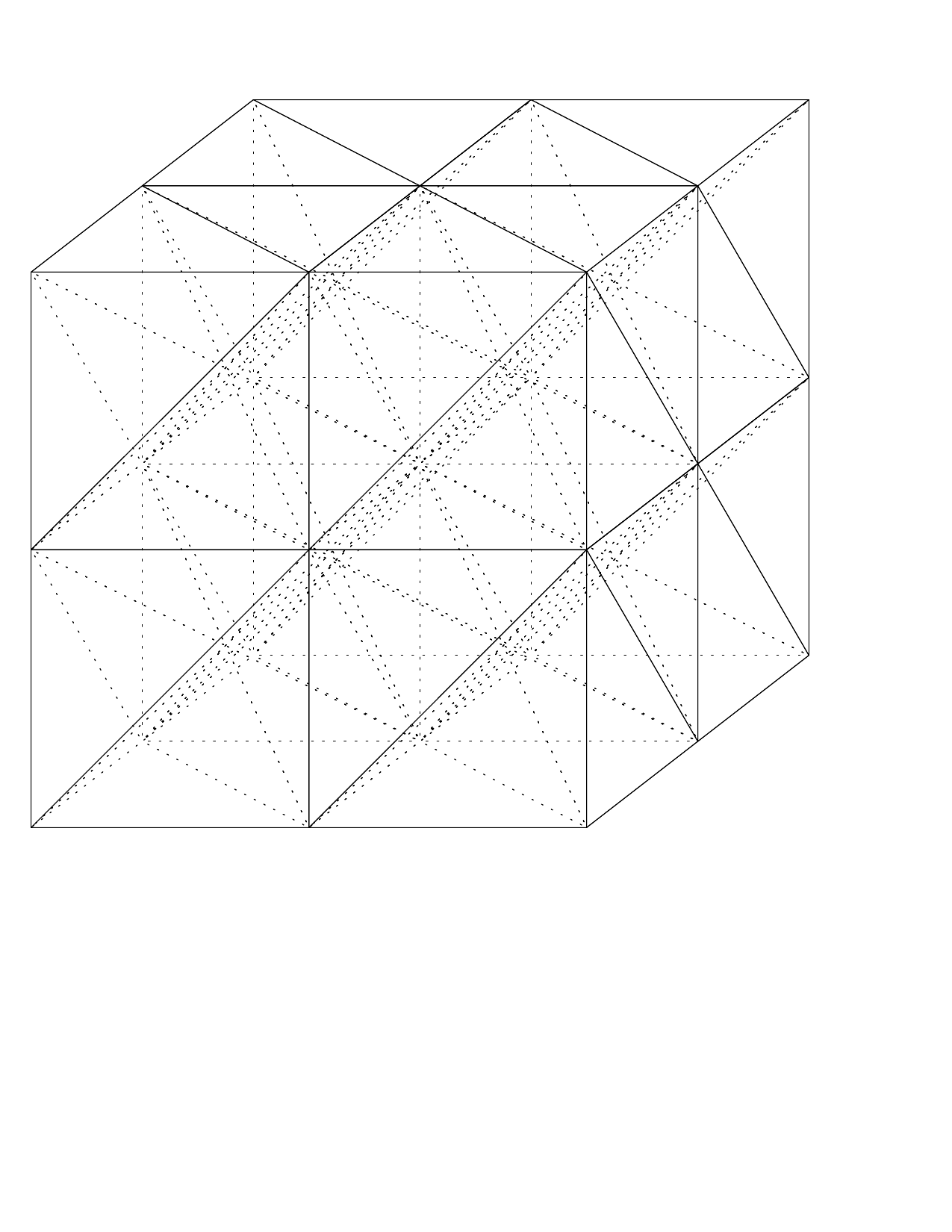}} 
\put(200,-50){\includegraphics[width=1.6in]{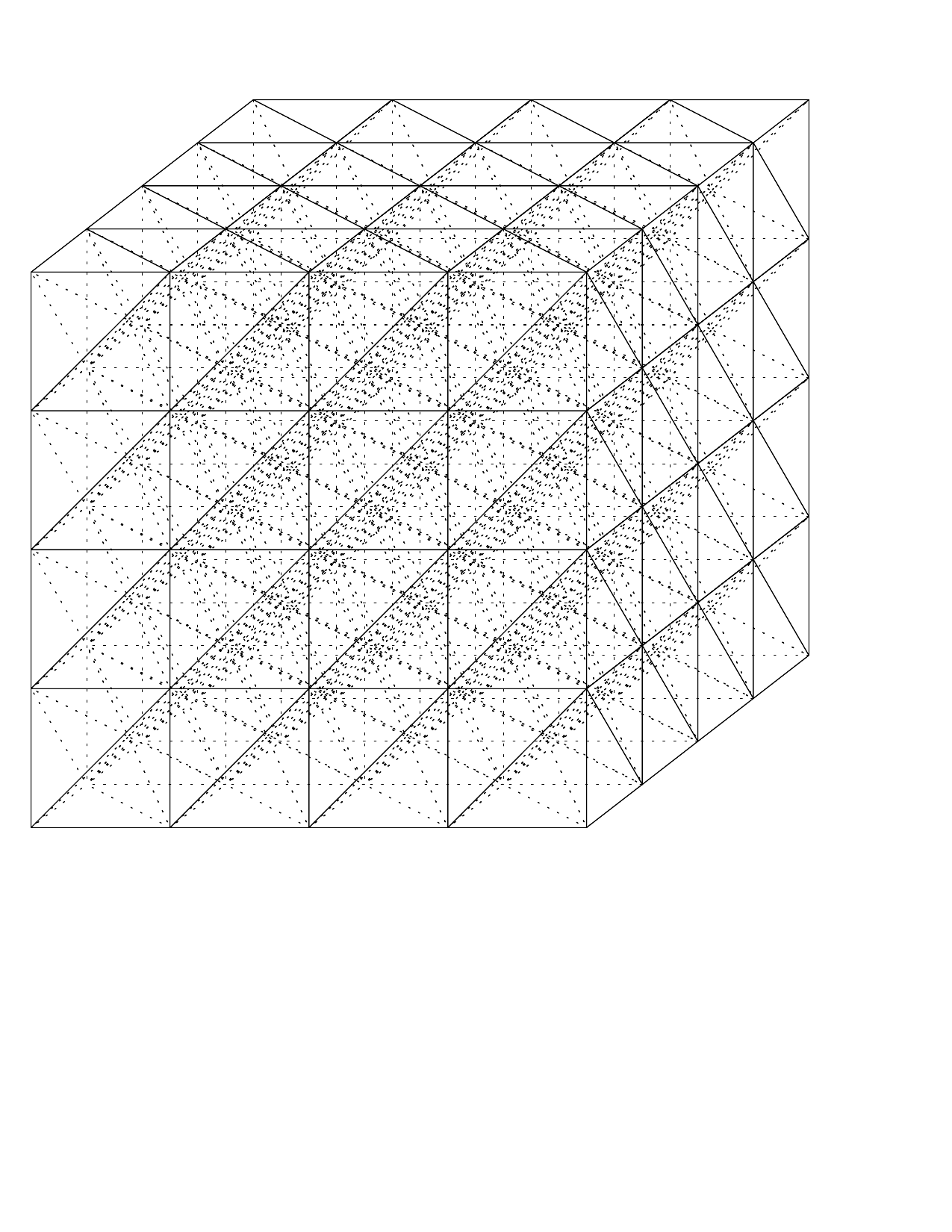}} 
\end{picture}
\caption{ The first three tetrahedral grids for the computation in Tables \ref{t1}.  }
\label{grid}
\end{center}
\end{figure}
  
The 3D tetrahedral grids employed in the computation are illustrated in Figure \ref{grid}.
In Table \ref{t1} we list the errors and the orders of convergence, 
	for the \ncb/$P_1^{\t{dis}}$ finite element \eqref{V-h}--\eqref{P-h},
    in solving problem \eqref{s3}.
We have optimal orders of convergence for all solutions in all norms, 
   confirming the main theorems.  

  \begin{table}[htb]
  \caption{ Error profile by the \ncb/$P^{\t{dis}}_1$ finite element for problem \eqref{s3} on 
     Figure \ref{grid} grids.} \label{t1}
\begin{center}  
   \begin{tabular}{c|rr|rr|rr}  
 \hline 
grid &  $ \|\b u - \b u_h \|_{0}$ & rate &  $ \|\nabla_h (\b u - \b u_h) \|_{0}$ & rate &
   $ \| p - p_h \|_{0}$   & rate   \\ \hline 
 1&   0.495E+00&0.0&   0.563E+01&0.0&   0.363E+01&0.0 \\
 2&   0.409E+00&0.3&   0.568E+01&0.0&   0.535E+01&0.0 \\
 3&   0.575E-01&2.8&   0.197E+01&1.5&   0.190E+01&1.5 \\
 4&   0.741E-02&3.0&   0.545E+00&1.9&   0.548E+00&1.8 \\
 5&   0.919E-03&3.0&   0.140E+00&2.0&   0.144E+00&1.9 \\
 6&   0.114E-03&3.0&   0.352E-01&2.0&   0.365E-01&2.0 \\
\hline 
\end{tabular} \end{center}  \end{table}

\section{Ethical Statement}

\subsection{Compliance with Ethical Standards} { \ }

   The submitted work is original and is not published elsewhere in any form or language.

\subsection{Funding } { \ }

  This research is not supported by any funding agency.

\subsection{Conflict of Interest} { \ }

  There is no potential conflict of interest .

\subsection{Ethical approval} { \ }

  This article does not contain any studies involving animals.
This article does not contain any studies involving human participants.
  
\subsection{Informed consent}  { \ }

This research does not have any human participant.  

\subsection{Availability of supporting data } { \ }

This research does not use any external or author-collected data.

\subsection{Authors' contributions } { \ }

The single author made all contribution.
  
\subsection{Acknowledgments } { \ }

  None.

\end{document}